\newcounter{strands}
\def\tr{\triangleright}
\def\otr{ \,\overline{\triangleright} \, }
\newtheorem{theorem}{Theorem}
\newtheorem{proposition}[theorem]{Proposition}
\theoremstyle{definition}
\newtheorem{example}{Example}
\newtheorem{definition}{Definition}
\newtheorem{remark}{Remark}
\newcommand{\Z}{\mathbb{Z}}
\date{}
\title{\Large \textbf{Legendrian Rack Invariants of Legendrian Knots}}
\author{
Jose Ceniceros \footnote{Email: \texttt{jcenicer@hamilton.edu}.}
\and
Mohamed Elhamdadi\footnote{Email: \texttt{emohamed@math.usf.edu}.}
 \and 
 Sam Nelson\footnote{Email: \texttt{knots@esotericka.org}. Partially supported by Simons Foundation collaboration grant 316709.}
 }
\begin{document}

\maketitle

\begin{abstract}
We define a new algebraic structure called \textit{Legendrian racks} or \textit{racks with Legendrian structure}, motivated by the front-projection Reidemeister moves for Legendrian knots. We provide examples of Legendrian racks and use these algebraic structures to define invariants of Legendrian knots with explicit computational examples. We classify Legendrian structures on racks with 3 and 4 elements. We use Legendrian racks to distinguish certain Legendrian knots which are equivalent as smooth knots.

\end{abstract}

\textsc{Keywords:} Legendrian knot, Legendrian rack, racks with Legendrian structure, contact structure, invariants of Legendrian knots and links

\textsc{2010 MSC:} 57M27, 57M25

\section{\large\textbf{Introduction}}

Racks and quandles are algebraic structures whose axioms were motivated by the Reidemeister moves in knot theory. Quandles were introduced independently by Joyce and Matveev in the 1980s \cite{Joyce, Matveev}; their generalizations known as
racks were introduced in the early 1990s by Fenn and Rourke \cite{FR}.  For oriented non-split links in $S^3$, the fundamental quandle of a link forms a complete invariant up to mirror image.  Quandles have been used to construct invariants of oriented knots and links in many papers over the last few decades.  Quandles have been studied in various contexts:  they have been studied, for example, as  algebraic systems for symmetries in \cite{Taka}, in relation to modules~\cite{Sam}, in relation to the Yang-Baxter equation \cite{ CES1, CN1},  ring theory \cite{EFT} and also in connection with topological spaces in \cite{CES, EM, Rubin}.  

Finite racks and quandles, in particular, give rise to powerful invariants of 
knots, links and other knotted objects (surface-links, handlebody-links, 
spatial graphs) through \textit{counting invariants} and their various 
enhancements. Since quandle colorings are preserved by Reidemeister moves,
the number of quandle colorings of a knot or link diagram is an integer-valued 
invariant. More generally, any invariant of quandle-colored knots and links
defines an invariant called an \textit{enhancement} from which the counting 
invariant can be recovered but which is typically a stronger invariant.
For more details on racks and quandles and their variations see~\cite{ENbook}.  

In \cite{KulPra}, the authors introduced rack invariants of oriented Legendrian knots in $\mathbb{R}^3$ endowed with the standard contact structure. These invariants are not complete but they detect some of the geometric properties in some Legendrian knots such as cusps. In this paper, we define a new algebraic structure called a \textit{Legendrian rack}, motivated by the front-projection Reidemeister moves for Legendrian knots.  We show that the resulting counting invariant distinguishes the unknot and its positive stabilization, the trefoil and its positive stabilization, the trefoil and its negative stabilization and more such pairs. The invariants given in \cite{KulPra} form a special case of our structure, but our invariants are able to distinguish Legendrian knots that are not distinguished by the the invariants in  \cite{KulPra}.  

The paper is organized as follows.  In Section~\ref{Review}, we review the basics of racks and quandles and give some examples.  Section~\ref{Contact} deals with an overview of contact geometry in general and relations to knot theory in particular. In Section~\ref{L}, we define \emph{Legendrian racks} motivated by Reidemeister moves in Legendrian knot theory.  A characterization of $(t,s)$-racks with a certain map being Legendrian racks is given.  This section contains a classification of Legendrian structures on racks with 3 and 4 elements in addition to some other explicit examples.  In Section~\ref{Col} colorings of Legendrian knots by Legendrian racks is used to distinguish certain Legendrian knots.  

\section{\large\textbf{Review of Racks and Quandles}}\label{Review}

We begin with a definition from \cite{FR}.

\begin{definition}
A \textit{rack} is a set $X$ with two binary operations $\tr$ and
$\otr$ satisfying for all $x,y,z\in X$
\begin{list}{}{}
\item[\textup{(i)}]{$(x\tr y)\otr y = x = (x\otr y)\tr y$} and
\item[\textup{(ii)}]{$(x\tr y)\tr z=(x\tr z)\tr(y\tr z)$}.
\end{list}
A rack which further satisfies $x\tr x=x$ for all $x\in X$ is a 
\textit{quandle}.
\end{definition}

\begin{example}\label{ex1} Some examples of racks and quandles include:

\begin{itemize}
\item Any group $G$ is a quandle with operation given by conjugation 
\[x \tr y=y^{-1}xy,\]
called the \textit{conjugation quandle} of $G$.
\item Any group $G$ is a quandle with operation \[x \tr y=yx^{-1}y\]
called the \emph{core} quandle of $G$. Core quandles are \textit{involutory}, 
i.e., $(x \tr y)\tr y=x, \forall x,y \in G$.
\item Any $\Z[t^{\pm 1}]$-module $X$ is a quandle with operation
\[x \tr y=tx+(1-t)y\] called an {\it  Alexander  quandle}.
\item Any group $G$ with an automorphism $\sigma \in {\rm Aut}(G)$ 
is a quandle with operation 
\[x \tr y=\sigma(xy^{-1}) y\] 
called a {\it generalized Alexander quandle}. When $G$ is abelian this reduces
to the case above.
\item Any $\mathbb{Z}[t^{\pm 1},s]/(s^2-(1-t)s)$-module $V$ is a rack
with rack operations
\[x\tr y=tx+sy\]
known as a \textit{$(t,s)$-rack.} Alexander quandles are $(t,s)$-racks with
$s=1-t$.
\end{itemize}
\end{example}

Quandles and racks are of interest in knot theory because they can be used to 
define an easily computable family of knot and link invariants known as 
\textit{counting invariants} or \textit{coloring invariants}. Given a finite
quandle $X$, an assignment of an element of $X$ to each arc in an oriented link
diagram $D$ is an \textit{$X$-coloring} of $D$ if at every crossing we have
the following picture:
\[\includegraphics{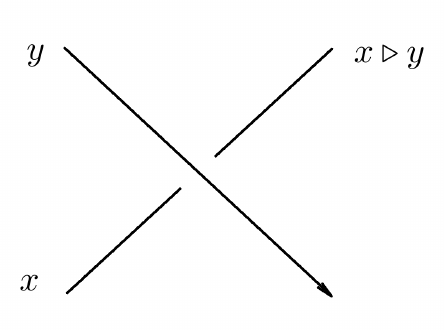}\]
That is, if the overcrossing strand is directed down, then the strand crossing
under from left to right colored by $x$ is acted on by the color on the 
overcrossing strand colored $y$ to become the undercrossing strand colored 
$x\tr y$. If we write $z=x\tr y$, then we can regard the crossing under in 
the opposite direction to be the inverse action by $y$, i.e. we have $z=x\tr y$
crossing under $y$ from right to left to become $x=z\tr^{-1} y$.

It is straightforward to check that Reidemeister moves do not change the number
of $X$-colorings of an oriented link diagram when $X$ is a quandle, and 
blackboard-framed Reidemeister moves do not change the number of $X$-colorings
of a blackboard-framed oriented link diagram. Hence, from any diagram $D$ of
an oriented link, we can compute the \textit{quandle counting invariant} 
$\Phi_X^{\mathbb{Z}}(L)$, i.e. the number of quandle colorings of our diagram 
$D$. This is an integer-valued invariant of oriented knots and links. 

\begin{example}
The trefoil knot below has 9 colorings by the Alexander quandle 
$X=\mathbb{Z}_3[t]/(t-2)$ as one can compute easily from the system of coloring
equations determined by the crossings.
\[\raisebox{-0.9in}{\includegraphics{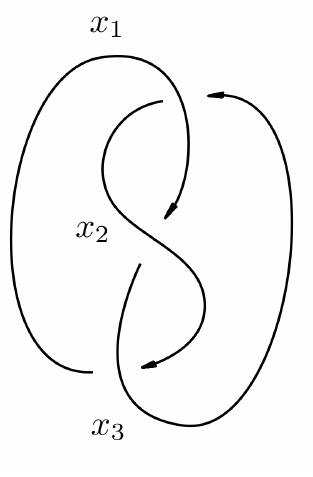}}\quad \quad
\begin{array}{rcl} 
tx_2+(1-t)x_1 & = & x_3 \\
tx_3+(1-t)x_2 & = & x_1 \\
tx_1+(1-t)x_3 & = & x_2 \\
\end{array}
\Rightarrow
\begin{array}{rcl} 
2x_2+2x_1 & = & x_3 \\
2x_3+2x_2 & = & x_1\\
2x_1+2x_3 & = & x_2.
\end{array}
\]
\end{example}
See \cite{ENbook} for more.

\section{Contact Manifolds and Knot Theory}\label{Contact}

\subsection{Standard Contact Structure on $\mathbb{R}^3$}
In this section we will introduce contact structures and related terminology. The goal of this section is to give an overview of contact geometry, for a more complete description of the theory and for important results the reader is referred to \cite{EN, Etnyre, Etnyre1, Geiges}.

\begin{definition}
\textup{An oriented 2-plane field $\xi$ on a 3-manifold $M$ is called a \textit{contact structure} if for any 1-form defined locally or globally with $\xi = \text{ker}(\alpha)$ satisfies $\alpha \wedge d \alpha \neq 0$. The pair $(M, \xi)$ is called a \textit{contact manifold}.} 
\end{definition}

The condition $\alpha \wedge d\alpha \neq 0$ is known as a totally non-integrability condition. This condition ensures that there is no embedded surface in $M$ which is tangent to $\xi$ on any open neighborhood. In this paper we will restrict our attention to the following contact structure on $\mathbb{R}^3$.

\begin{example}\label{stdcon}
Let $\mathbb{R}^3$ with standard Cartesian coordinates $(x,y,z)$ and the 1-form
\[ \alpha = dz - y dx. \]
We can confirm that the non-integrability condition is met by the following computation
\begin{eqnarray*}
\alpha \wedge d\alpha &=&(dz -ydx) \wedge (-dy \wedge dx) \\
&=&(-dz \wedge dy \wedge dx) + y dx \wedge dy \wedge dx\\
&=& dx \wedge dy \wedge dz.
\end{eqnarray*}
Thus, $\alpha$ is a called the contact form and 
\begin{eqnarray*}
\xi_{std} &=& \mathrm{ker}(\alpha)\\ 
&=& \mathrm{ker}(dz -ydx)\\
&=& \mathrm{Span}\left\lbrace  \frac{\partial}{\partial y}, y \frac{\partial}{\partial z} + \frac{\partial}{\partial x} \right\rbrace
\end{eqnarray*}
is a contact structure on $\mathbb{R}^3$.
\end{example}


\begin{remark}
At any point in the $xz$-plane $\xi$ is horizontal and moving along a ray perpendicular to the $xz$-plane the plane field will always be tangent to this ray and rotate by $\pi/2$ in a right handed manner as  move along the ray.
\end{remark}

Example \ref{stdcon} is commonly referred to as the standard contact structure on $\mathbb{R}^3$. As mentioned above we will restrict our attention to the contact manifold $(\mathbb{R}^3, \xi_{std})$. We will be specifically interested in $1$-dimensional submanifolds in $(\mathbb{R}^3, \xi_{std})$. 

\subsection{Legendrian knots}

We will be considering knots in $(\mathbb{R}^3, \xi_{std})$, which are simple closed curves that respect the geometry imposed by the contact structure. There are two natural ways that knots can respect the geometry imposed by contact structures, therefore, there are two classes of knots: the Legendrian class and the transverse class. We will restrict our attention to Legendrian knots.  This section is not meant to be a complete survey on the subject, for a detail description of knot theory supported in a contact 3-manifold, see \cite{Etnyre, Geiges, Sab}.

We have the following definition from \cite{Geiges}
\begin{definition}
A \textit{Legendrian knot} $L$ in $(\mathbb{R}^3, \xi_{std})$ is a 
smooth embedding of $S^1$ that is always tangent to $\xi_{std}$:
\[ T_x L \in \xi_x, \quad x \in L.\]
\end{definition}
Where $T_xL$ is the tangent space of $L$ at the point $x$ and $\xi_x$ is the contact plane from the contact structure $\xi_{std}$ at the point $x$.

Two Legendrian knots in $(\mathbb{R}^3, \xi_{std})$ are \textit{Legendrian isotopic} if there is an isotopy through Legendrian knots between the two knots. A Legendrian knot can be parameterized by an embedding $\phi: S^1 \rightarrow \mathbb{R}^3$ defined by $\phi(\theta) = ( x(\theta), y(\theta), z (\theta))$. A parametrization of $L$ will induce an orientation on $L$, therefore, we can consider \textit{oriented Legendrian knot} by choosing the orientation induced by $\phi$. Studying the Legendrian knot in $\mathbb{R}^3$ is difficult, therefore, it is common to study projections of $L$ in $\mathbb{R}^2$. We will focus on the projection known as the \textit{front projection}. Before we describe the front projection of $L$, we should note that since $\phi$ is a parametrization of $L$ and  $\xi = ker(dz - y dx)$, therefore, in order for $L$ to be tangent to the contact planes $\phi$ must satisfy the following:
\begin{equation}
z'(\theta) - y(\theta) x'(\theta) = 0. 
\end{equation}  
Let $\Pi : \mathbb{R}^3 \rightarrow \mathbb{R}^2$ defined by $(x, y, z) \mapsto (x,z)$. The image of $L$ under $\Pi$ is the \textit{front projection} of $L$. If $\phi$ is a parametrization of $L$, then 
\[  \phi_{\Pi} : S^1 \rightarrow \mathbb{R}^2 \]
defined by 
\[ \theta \mapsto (x(\theta), z(\theta)) \]
is a parametrization of the image of $L$ under $\Pi$. From equation (1) we get $y(\theta) = \frac{z'(\theta)}{x'(\theta)}$ provided that $(x(\theta), z(\theta))$ does not have vertical tangencies. We can summarize the conditions on a front projection for a Legendrian knot by
\begin{enumerate}
\item $K$ has no vertical tangencies,
\item the only non-smooth points are cusps,
\item at each crossing the slope of the over crossing is smaller than the undercrossing.
\end{enumerate}

Two Legendrian knots $L_1$ and $L_2$ are Legendrian isotopic if and only if their front projections are related by a sequence of \textit{Legendrian Reidemeister moves} listed below as well as the rotation of each diagram by 180 degrees about all the coordinate axes.
\vspace{1cm}
\[
\begin{tabular}{lll}
\begin{tikzpicture}[domain=-2:2, scale=1, knot gap=9, transform canvas={scale=0.65}]
\draw [knot=black,line width=0.50mm](1.732,1) to  [out=left, in=45, looseness=0.7] (-1.732,-1);
\draw [knot=black,line width=0.50mm](-1.732,1) to  [out=right, in=135, looseness=0.7] (1.732,-1);
\draw [knot=black, knot gap=0,line width=0.50mm](-1.732,1) to [out=right, in=left, looseness=0.7] (0,1.5) to [out=right, in=left, looseness=0.7] (1.732,1);
\end{tikzpicture}
\hspace{2cm}
&
\begin{tikzpicture}[scale=.9, transform shape]
\draw[>=triangle 45, <->] (0,0) -- (2,0);
\end{tikzpicture}
\hspace{2cm}
&
\begin{tikzpicture}[domain=-2:2, scale=1, knot gap=9,transform canvas={scale=0.65}]
\draw [knot=black,line width=0.50mm](-2,0) to [out=45, in=left, looseness=1] (0,1) to [out=right, in=135, looseness=1] (2,0);
\end{tikzpicture}
\end{tabular}
\]
\vspace{2cm}
\[
\begin{tabular}{lll}
\begin{tikzpicture}[domain=-2:2, scale=1, knot gap=9, transform canvas={scale=0.65}]
\draw [knot=black,line width=0.50mm]
(-1,1.9365) to [out=right, in=left, looseness=1] (2,0) to [out=left, in=right, looseness=1] (-1,-1.9365);
\draw [knot=black,line width=0.50mm]
(0.2,1.9365) to [out=down, in=up, looseness=0] (1,-1.9365) ;
\end{tikzpicture}
\hspace{2cm}
&
\begin{tikzpicture}[scale=.9, transform shape]
\draw[>=triangle 45, <->] (0,0) -- (2,0);
\end{tikzpicture}
\hspace{2cm}
&
\begin{tikzpicture}[domain=-2:2, scale=1, knot gap=9, transform canvas={scale=0.65}]
\draw [knot=black,line width=0.50mm]
(-1.5,1.9365) to [out=right, in=left, looseness=1] (0.5,0) to [out=left, in=right, looseness=1] (-1.5,-1.9365);
 \draw [knot=black,line width=0.50mm]
(0.5,1.9365) to [out=down, in=up, looseness=0] (1.5,-1.9365) ;
\end{tikzpicture}
\end{tabular}
\]
\vspace{2cm}
\[
\begin{tabular}{lll}
\begin{tikzpicture}[domain=-2:2, scale=1, knot gap=9, transform canvas={scale=0.65}]
\draw [knot=black,line width=0.50mm]
(2,2) to  [out=down, in=up, looseness=0] (-2,-2);
\draw [knot=black,line width=0.50mm]
(2,0) to  [out=left, in=right, looseness=1] (0,1) to  [out=left, in=right, looseness=1] (-2,0)  ;
\draw [knot=black,line width=0.50mm]
(-2,2) to  [out=down, in=up, looseness=0] (2,-2);
\end{tikzpicture}
\hspace{2cm}
&
\begin{tikzpicture}[scale=.9, transform shape]
\draw[>=triangle 45, <->] (0,0) -- (2,0);
\end{tikzpicture}
\hspace{2cm}
&
\begin{tikzpicture}[domain=-2:2, scale=1, knot gap=9, transform canvas={scale=0.65}]
\draw [knot=black,line width=0.50mm]
(2,2) to  [out=down, in=up, looseness=0] (-2,-2);
\draw [knot=black,line width=0.50mm]
(2,0) to  [out=left, in=right, looseness=1] (0,-1) to  [out=left, in=right, looseness=1] (-2,0)  ;
\draw [knot=black,line width=0.50mm]
(-2,2) to  [out=down, in=up, looseness=0] (2,-2);
\end{tikzpicture}
\end{tabular}
\]
\vspace{1cm}

An interesting note about Legendrian knots is that you have different Legendrian knot representatives of a topological knot type. The operation that produces different Legendrian knots of the same topological knot type is called \textit{stabilization}. A stabilization of a Legendrian knot $L$ in a front projection of $L$ can be obtained by removing a strand and replacing it with a zig-zag. We denote a positive stabilization by $S_{+}$ and a negative stabilization by $S_{-}$. 

\vspace{1cm}
\[
\begin{tabular}{lll}
\begin{tikzpicture}[transform canvas={scale=0.65}]
\draw[line width=0.50mm, ->](-7,0) --(-4,0);
\draw[>=triangle 45, ->] (-2,1) -- (3,1.5);
\draw[>=triangle 45 , ->] (-2,-1) -- (3,-1.5);
\draw [line width=0.50mm, ->](4,2) to [out=right, in=left, looseness=.7] (5,2.5) to [out=right, in=left, looseness=.7] (6,2) to [out=left, in=right, looseness=.7] (5,1.5) to [out=right, in=left, looseness=.7] (6,1)to [out=right, in=left, looseness=.7] (7,1.5);
\draw [line width=0.50mm, ->](4,-2) to [out=right, in=left, looseness=.7] (5,-2.5) to [out=right, in=left, looseness=.7] (6,-2) to [out=left, in=right, looseness=.7] (5,-1.5) to [out=right, in=left, looseness=.7] (6,-1)to [out=right, in=left, looseness=.7] (7,-1.5);
\node [draw=none, inner sep = 0] at (0,2) {\Large{$S_{+}$}};
\node [draw=none, inner sep = 0] at (0,-2) {\Large{$S_{-}$}};
\end{tikzpicture}
\end{tabular}
\]
\vspace{1cm}

\begin{example}
The following are two Legendrian isotopic representatives of the unknot:
\vspace{.5cm}
\[
\begin{tabular}{ll}
\begin{tikzpicture}[domain=-3:3, scale=1, knot gap=9, transform canvas={scale=0.65}]
\draw [knot=black, knot gap=0,line width=0.50mm](-3,0) to [out=right, in=left, looseness=0.7] (0,1.5) to [out=right, in=left, looseness=0.7] (3,.75);
\draw [knot=black, knot gap=0,line width=0.50mm](3,.75) to [out=left, in=right, looseness=0.7] (1,0);
\draw [knot=black, knot gap=0,line width=0.50mm](1,0) to [out=right, in=left, looseness=0.7] (3,-.75);
\draw [knot=black, knot gap=0,line width=0.50mm](-3,0) to [out=right, in=left, looseness=0.7] (0,-1.5) to [out=right, in=left, looseness=0.7] (3,-.75);
\end{tikzpicture} \hspace{5cm}
&
\begin{tikzpicture}[domain=-2:2, scale=1, knot gap=9, transform canvas={scale=0.65}]
\begin{knot}[consider self intersections, clip width=9]
\strand[line width=0.50mm](-2,0) to [out=right, in=left, looseness=0.7] (-1,1) to [out=right, in=left, looseness=0.7] (1,-1) to [out=right, in=left, looseness=0.7] (2,0) to [out=left, in=right, looseness=0.7] (1, 1) to [out=left, in=right, looseness=0.7] (-1, -1) to [out=left, in=right, looseness=0.7] (-2, 0);
\end{knot}
\end{tikzpicture}
\end{tabular}
\]
\vspace{.5cm}
\end{example}

The problem of classifying Legendrian knots is a difficult problem, \cite{EN1,EN2}. The first 
invariant is the topological knot type of the Legendrian knot. Legendrian 
knots also come equipped with two invariants known as the \emph{classical} invariants.
The first is the \textit{Thurston-Bennequin number} denoted by $tb$. The 
second invariant is the \textit{rotation number} denoted by $rot$. Both of 
these invariants can be computed directly from front projections, but they 
also have deep relationships to the underlying geometric structure.

A topological knot type is \textit{Legendrian simple} if all Legendrian knots 
in its class are determined up to Legendrian isotopy by their classical 
invariants. Some knot types which are known to be Legendrian simple include the 
unknot, torus knots, and the figure eight knot. Note that the classical 
invariants are not sufficient to classify all Legendrian knots. The 
introduction of finer invariants such as contact homology, Chekanov's DGA, and the GRID invariants have been useful tools in addressing the classification problem \cite{CH, EN3, Oz1}.

\section{Legendrian Racks}\label{L}

We introduce the notion of \textit{Legendrian rack} and we give some examples. We will assign labels to the arcs of a Legendrian knots in the following manner: 

\vspace{.5cm}
\[
\begin{tabular}{lll}
\begin{tikzpicture}[domain=-2:2, scale=1, knot gap=9, transform canvas={scale=0.65}]
\draw [knot=black,line width=0.50mm,
decoration={markings, mark=at position 0.25 with {\arrow[black,ultra thick]{>}}},
decoration={markings, mark=at position 0.76 with {\arrow[black,ultra thick]{>}}}, postaction={decorate}]
(1.5,1.3229) to  [out=-135, in=45, looseness=0] (-1.5,-1.3229) ;
\draw [knot=black,line width=0.50mm,
decoration={markings, mark=at position 0.25  with {\arrow[black,ultra thick]{>}}},
decoration={markings, mark=at position 0.76 with {\arrow[black,ultra thick]{>}}}, postaction={decorate}]
(-1.5,1.3229) to [out=-45, in=135, looseness=0] (1.5,-1.3229);
\node [draw=none, inner sep = 0] at (-1.6,1.6) {\Large{${x}$}};
\node [draw=none, inner sep = 0] at (1.6,1.6) {\Large{$y$}};
\node [draw=none, inner sep = 0] at (-1.6,-1.6) {\Large{$y \otr x$}};
\node [draw=none, inner sep = 0] at (1.5,-1.6) {\Large{$x $}};
\end{tikzpicture}
\hspace{4cm}
&
\begin{tikzpicture}[domain=-2:2, scale=1, knot gap=9, transform canvas={scale=0.65}]
\draw [knot=black,line width=0.50mm,
decoration={markings, mark=at position 0.25 with {\arrow[black,ultra thick]{>}}},
decoration={markings, mark=at position 0.76 with {\arrow[black,ultra thick]{>}}}, postaction={decorate}]
(-1.5,-1.3229) to  [out=-135, in=45, looseness=0] (1.5,1.3229) ;
\draw [knot=black,line width=0.50mm,
decoration={markings, mark=at position 0.25  with {\arrow[black,ultra thick]{>}}},
decoration={markings, mark=at position 0.76 with {\arrow[black,ultra thick]{>}}}, postaction={decorate}]
(-1.5,1.3229) to [out=-45, in=135, looseness=0] (1.5,-1.3229);
\node [draw=none, inner sep = 0] at (-1.6,1.6) {\Large{$x$}};
\node [draw=none, inner sep = 0] at (1.6,1.6) {\Large{$y \tr x$}};
\node [draw=none, inner sep = 0] at (-1.6,-1.6) {\Large{$y$}};
\node [draw=none, inner sep = 0] at (1.6,-1.6) {\Large{$x$}};
\end{tikzpicture}
\hspace{4cm}
&
\begin{tikzpicture}[domain=-2:2, scale=1, knot gap=9, transform canvas={scale=0.65}]
\draw [knot=black,line width=0.50mm,
decoration={markings, mark=at position 0.25 with {\arrow[black,ultra thick]{<}}},
decoration={markings, mark=at position 0.75 with {\arrow[black,ultra thick]{<}}}, postaction={decorate}]
(-1,1.3229) to [out=right, in=left, looseness=1] (0.5,0) to [out=left, in=right, looseness=1] (-1,-1.3229);
\node [draw=none, inner sep = 0] at (-1.1,1.7) {\Large{$f(x)$}};
\node [draw=none, inner sep = 0] at (-1,-1.6) {\Large{$x$}};
\end{tikzpicture}
\end{tabular}
\]
\vspace{1cm}

The definition of \textit{Legendrian rack} is motivated by the diagrams of Legendrian Reidemeister moves subject to the above relation (see the figures below). The type I move has four diagrams, but we include only two diagrams. It is easy to check that the other two diagrams do not give different relations. 




\vspace{1cm}
\[
\begin{tabular}{lll}
\begin{tikzpicture}[domain=-2:2, scale=1, knot gap=9, transform canvas={scale=0.65}]
\draw [knot=black,line width=0.50mm,
decoration={markings, mark=at position 0.25 with {\arrow[black,ultra thick]{<}}},
decoration={markings, mark=at position 0.75 with {\arrow[black,ultra thick]{<}}}, postaction={decorate}]
(1.732,1) to  [out=left, in=45, looseness=0.7] (-1.732,-1);
\draw [knot=black,line width=0.50mm,
decoration={markings, mark=at position 0.25 with {\arrow[black,ultra thick]{>}}},
decoration={markings, mark=at position 0.75 with {\arrow[black,ultra thick]{>}}}, postaction={decorate}]
(-1.732,1) to  [out=right, in=135, looseness=0.7] (1.732,-1) ;
\draw [knot=black, knot gap=0,line width=0.50mm,
decoration={markings, mark=at position 0.25  with {\arrow[black,ultra thick]{<}}},
decoration={markings, mark=at position 0.75 with {\arrow[black,ultra thick]{<}}}, postaction={decorate}]
(-1.732,1) to [out=right, in=left, looseness=0.7] (0,1.5) to [out=right, in=left, looseness=0.7] (1.732,1);
\node [draw=none, inner sep = 0] at (-1.7,0.5) {\Large{$f^2(x \tr x)$}};
\node [draw=none, inner sep = 0] at (1.5,0.5) {\Large{$x \tr x$}};
\node [draw=none, inner sep = 0] at (1.2,1.8) {\Large{$f(x \tr x)$}};
\node [draw=none, inner sep = 0] at (-1.8,-1.2) {\Large{$x$}};
\node [draw=none, inner sep = 0] at (1.8,-1.2) {\Large{$x$}};
\end{tikzpicture}
\hspace{2cm}
&
\begin{tikzpicture}[scale=.9, transform shape]
\draw[>=triangle 45, <->] (0,0) -- (2,0);
\end{tikzpicture}
\hspace{2cm}
&
\begin{tikzpicture}[domain=-2:2, scale=1, knot gap=9,transform canvas={scale=0.65}]
\draw [knot=black,line width=0.50mm,
decoration={markings, mark=at position 0.25  with {\arrow[black,ultra thick]{>}}},
decoration={markings, mark=at position 0.75 with {\arrow[black,ultra thick]{>}}}, postaction={decorate}]
(-2,0) to [out=45, in=left, looseness=1] (0,1) to [out=right, in=135, looseness=1] (2,0);
\node [draw=none, inner sep = 0] at (-2,-0.2) {\Large{$x$}};
\node [draw=none, inner sep = 0] at (2,-0.2) {\Large{$x$}};
\end{tikzpicture}
\end{tabular}
\]
\vspace{2cm}
\[
\begin{tabular}{lll}
\begin{tikzpicture}[domain=-2:2, scale=1, knot gap=9, transform canvas={scale=0.65}]
\draw [knot=black,line width=0.50mm,
decoration={markings, mark=at position 0.25 with {\arrow[black,ultra thick]{>}}},
decoration={markings, mark=at position 0.75 with {\arrow[black,ultra thick]{>}}}, postaction={decorate}]
(1.732,1) to  [out=left, in=45, looseness=0.7] (-1.732,-1);
\draw [knot=black,line width=0.50mm,
decoration={markings, mark=at position 0.25 with {\arrow[black,ultra thick]{<}}},
decoration={markings, mark=at position 0.75 with {\arrow[black,ultra thick]{<}}}, postaction={decorate}]
(-1.732,1) to  [out=right, in=135, looseness=0.7] (1.732,-1) ;
\draw [knot=black, knot gap=0,line width=0.50mm,
decoration={markings, mark=at position 0.25  with {\arrow[black,ultra thick]{>}}},
decoration={markings, mark=at position 0.75 with {\arrow[black,ultra thick]{>}}}, postaction={decorate}]
(-1.732,1) to [out=right, in=left, looseness=0.7] (0,1.5) to [out=right, in=left, looseness=0.7] (1.732,1);
\node [draw=none, inner sep = 0] at (1.3,0.4) {\Large{$f^2(x)$}};
\node [draw=none, inner sep = 0] at (1.3,1.8) {\Large{$f(x)$}};
\node [draw=none, inner sep = 0] at (-1.8,-1.2) {\Large{$f^2(x) \tr x$}};
\node [draw=none, inner sep = 0] at (1.8,-1.2) {\Large{$x$}};
\end{tikzpicture}
\hspace{2cm}
&
\begin{tikzpicture}[scale=.9, transform shape]
\draw[>=triangle 45, <->] (0,0) -- (2,0);
\end{tikzpicture}
\hspace{2cm}
&
\begin{tikzpicture}[domain=-2:2, scale=1, knot gap=9,transform canvas={scale=0.65}]
\draw [knot=black,line width=0.50mm,
decoration={markings, mark=at position 0.25  with {\arrow[black,ultra thick]{<}}},
decoration={markings, mark=at position 0.75 with {\arrow[black,ultra thick]{<}}}, postaction={decorate}]
(-2,0) to [out=45, in=left, looseness=1] (0,1) to [out=right, in=135, looseness=1] (2,0);
\node [draw=none, inner sep = 0] at (-2,-0.2) {\Large{$x$}};
\node [draw=none, inner sep = 0] at (2,-0.2) {\Large{$x$}};
\end{tikzpicture}
\end{tabular}
\]
\vspace{1cm}

Now we consider the four diagrams coming from the Legendrian Reidemeister move type II. 



\vspace{1cm}

\[
\begin{tabular}{lll}
\begin{tikzpicture}[domain=-2:2, scale=1, knot gap=9, transform canvas={scale=0.65}]
\draw [knot=black,line width=0.50mm,
decoration={markings, mark=at position 0.20  with {\arrow[black,ultra thick]{<}}},
decoration={markings, mark=at position 0.80 with {\arrow[black,ultra thick]{<}}}, postaction={decorate}]
(-1.5,1.5) to [out=right, in=left, looseness=1] (2,0) to [out=left, in=right, looseness=1] (-1.5,-1.5);
\draw [knot=black,line width=0.50mm,
decoration={markings, mark=at position 0.25 with {\arrow[black,ultra thick]{>}}},
decoration={markings, mark=at position 0.50 with {\arrow[black,ultra thick]{>}}},
decoration={markings, mark=at position 0.76 with {\arrow[black,ultra thick]{>}}}, postaction={decorate}]
(0.2,1.9365) to [out=down, in=up, looseness=0] (1,-1.9365) ;
\node [draw=none, inner sep = 0] at (-1.8,2) {\Large{$f(x \tr y) \otr y$}};
\node [draw=none, inner sep = 0] at (0.2,2.2) {\Large{$y$}};
\node [draw=none, inner sep = 0] at (1.55,0.80) {\Large{$f(x \tr y)$}};
\node [draw=none, inner sep = 0] at (1.55,-0.60) {\Large{$x \tr y$}};
\node [draw=none, inner sep = 0] at (-1.5,-2) {\Large{$x$}};
\node [draw=none, inner sep = 0] at (1,-2.2) {\Large{$y$}};
\end{tikzpicture}
\hspace{2cm}
&
\begin{tikzpicture}[scale=.9, transform shape]
\draw[>=triangle 45, <->] (0,0) -- (2,0);
\end{tikzpicture}
\hspace{2cm}
&
\begin{tikzpicture}[domain=-2:2, scale=1, knot gap=9, transform canvas={scale=0.65}]
\draw [knot=black,line width=0.50mm,
decoration={markings, mark=at position 0.25 with {\arrow[black,ultra thick]{<}}},
decoration={markings, mark=at position 0.75 with {\arrow[black,ultra thick]{<}}}, postaction={decorate}]
(-2,1.5) to [out=right, in=left, looseness=1] (0.5,0) to [out=left, in=right, looseness=1] (-2,-1.5);
 \draw [knot=black,line width=0.50mm,
decoration={markings, mark=at position 0.25 with {\arrow[black,ultra thick]{>}}},
decoration={markings, mark=at position 0.76 with {\arrow[black,ultra thick]{>}}}, postaction={decorate}]
(0.5,1.9365) to [out=down, in=up, looseness=0] (1.5,-1.9365) ;
\node [draw=none, inner sep = 0] at (-2,2) {\Large{$f(x)$}};
\node [draw=none, inner sep = 0] at (.5,2.2) {\Large{$y$}};
\node [draw=none, inner sep = 0] at (1.5,-2.2) {\Large{$y$}};
\node [draw=none, inner sep = 0] at (-2,-2) {\Large{$x$}};
\end{tikzpicture}
\end{tabular}
\]
\vspace{3cm}
\[
\begin{tabular}{lll}
\begin{tikzpicture}[domain=-2:2, scale=1, knot gap=9, transform canvas={scale=0.65}]
\draw [knot=black,line width=0.50mm,
decoration={markings, mark=at position 0.20  with {\arrow[black,ultra thick]{<}}},
decoration={markings, mark=at position 0.80 with {\arrow[black,ultra thick]{<}}}, postaction={decorate}]
(-1.5,1.5) to [out=right, in=left, looseness=1] (2,0) to [out=left, in=right, looseness=1] (-1.5,-1.5);
\draw [knot=black,line width=0.50mm,
decoration={markings, mark=at position 0.25 with {\arrow[black,ultra thick]{<}}},
decoration={markings, mark=at position 0.50 with {\arrow[black,ultra thick]{<}}},
decoration={markings, mark=at position 0.76 with {\arrow[black,ultra thick]{<}}}, postaction={decorate}]
(0.2,1.9365) to [out=down, in=up, looseness=0] (1,-1.9365) ;
\node [draw=none, inner sep = 0] at (-1.8,2) {\Large{$f(x \otr y) \tr y$}};
\node [draw=none, inner sep = 0] at (0.2,2.2) {\Large{$y$}};
\node [draw=none, inner sep = 0] at (1.55,0.80) {\Large{$f(x \otr y)$}};
\node [draw=none, inner sep = 0] at (1.55,-0.60) {\Large{$x \otr y$}};
\node [draw=none, inner sep = 0] at (-1.5,-2) {\Large{$x$}};
\node [draw=none, inner sep = 0] at (1,-2.2) {\Large{$y$}};
\end{tikzpicture}
\hspace{2cm}
&
\begin{tikzpicture}[scale=.9, transform shape]
\draw[>=triangle 45, <->] (0,0) -- (2,0);
\end{tikzpicture}
\hspace{2cm}
&
\begin{tikzpicture}[domain=-2:2, scale=1, knot gap=9, transform canvas={scale=0.65}]
\draw [knot=black,line width=0.50mm,
decoration={markings, mark=at position 0.25 with {\arrow[black,ultra thick]{<}}},
decoration={markings, mark=at position 0.75 with {\arrow[black,ultra thick]{<}}}, postaction={decorate}]
(-2,1.5) to [out=right, in=left, looseness=1] (0.5,0) to [out=left, in=right, looseness=1] (-2,-1.5);
 \draw [knot=black,line width=0.50mm,
decoration={markings, mark=at position 0.25 with {\arrow[black,ultra thick]{<}}},
decoration={markings, mark=at position 0.76 with {\arrow[black,ultra thick]{<}}}, postaction={decorate}]
(0.5,1.9365) to [out=down, in=up, looseness=0] (1.5,-1.9365) ;
\node [draw=none, inner sep = 0] at (-2,2) {\Large{$f(x)$}};
\node [draw=none, inner sep = 0] at (.5,2.2) {\Large{$y$}};
\node [draw=none, inner sep = 0] at (1.5,-2.2) {\Large{$y$}};
\node [draw=none, inner sep = 0] at (-2,-2) {\Large{$x$}};
\end{tikzpicture}
\end{tabular}
\]
\vspace{3cm}
\[
\begin{tabular}{lll}
\begin{tikzpicture}[domain=-2:2, scale=1, knot gap=9, transform canvas={scale=0.65}]
\draw [knot=black,line width=0.50mm,
decoration={markings, mark=at position 0.25 with {\arrow[black,ultra thick]{<}}},
decoration={markings, mark=at position 0.50 with {\arrow[black,ultra thick]{<}}},
decoration={markings, mark=at position 0.76 with {\arrow[black,ultra thick]{<}}}, postaction={decorate}]
(1.5,1.9365) to [out=down, in=up, looseness=0] (0,-1.9365) ;
\draw [knot=black,line width=0.50mm,
decoration={markings, mark=at position 0.20  with {\arrow[black,ultra thick]{<}}},
decoration={markings, mark=at position 0.80 with {\arrow[black,ultra thick]{<}}}, postaction={decorate}]
(2,1) to [out=left, in=right, looseness=1] (.7, 1.5) to [out=left, in=right, looseness=1] (-2,0) to [out=right, in=left, looseness=1] (2,-1.5);
\node [draw=none, inner sep = 0] at (-.8,2.2) {\Large{$f(x)$}};
\node [draw=none, inner sep = 0] at (2,2) {\Large{$(y \otr x) \tr f(x)$}};
\node [draw=none, inner sep = 0] at (1.5,0) {\Large{$y \otr x$}};
\node [draw=none, inner sep = 0] at (-.5,-2.2) {\Large{$y$}};
\node [draw=none, inner sep = 0] at (2,-2) {\Large{$x$}};
\end{tikzpicture}
\hspace{2cm}
&
\begin{tikzpicture}[scale=.9, transform shape]
\draw[>=triangle 45, <->] (0,0) -- (2,0);
\end{tikzpicture}
\hspace{2cm}
&
\begin{tikzpicture}[domain=-2:2, scale=1, knot gap=9, transform canvas={scale=0.65}]
\draw [knot=black,line width=0.50mm,
decoration={markings, mark=at position 0.20  with {\arrow[black,ultra thick]{<}}},
decoration={markings, mark=at position 0.80 with {\arrow[black,ultra thick]{<}}}, postaction={decorate}]
(2,1.5) to [out=left, in=right, looseness=1] (-.5,0) to [out=right, in=left, looseness=1] (2,-1.5);
 \draw [knot=black,line width=0.50mm,
decoration={markings, mark=at position 0.25 with {\arrow[black,ultra thick]{<}}},
decoration={markings, mark=at position 0.76 with {\arrow[black,ultra thick]{<}}}, postaction={decorate}]
(0,1.9365) to [out=down, in=up, looseness=0] (-1.5,-1.9365) ;
\node [draw=none, inner sep = 0] at (2,2) {\Large{$f(x)$}};
\node [draw=none, inner sep = 0] at (0,2.2) {\Large{$y$}};
\node [draw=none, inner sep = 0] at (2,-2) {\Large{$x$}};
\node [draw=none, inner sep = 0] at (-1.5,-2.2) {\Large{$y$}};
\end{tikzpicture}
\end{tabular}
\]
\vspace{3cm}
\[
\begin{tabular}{lll}
\begin{tikzpicture}[domain=-2:2, scale=1, knot gap=9, transform canvas={scale=0.65}]
\draw [knot=black,line width=0.50mm,
decoration={markings, mark=at position 0.25 with {\arrow[black,ultra thick]{>}}},
decoration={markings, mark=at position 0.50 with {\arrow[black,ultra thick]{>}}},
decoration={markings, mark=at position 0.76 with {\arrow[black,ultra thick]{>}}}, postaction={decorate}]
(1.5,1.9365) to [out=down, in=up, looseness=0] (0,-1.9365) ;
\draw [knot=black,line width=0.50mm,
decoration={markings, mark=at position 0.20  with {\arrow[black,ultra thick]{<}}},
decoration={markings, mark=at position 0.80 with {\arrow[black,ultra thick]{<}}}, postaction={decorate}]
(2,1) to [out=left, in=right, looseness=1] (.7, 1.5) to [out=left, in=right, looseness=1] (-2,0) to [out=right, in=left, looseness=1] (2,-1.5);
\node [draw=none, inner sep = 0] at (0,2.2) {\Large{$f(x)$}};
\node [draw=none, inner sep = 0] at (2,2) {\Large{$y$}};
\node [draw=none, inner sep = 0] at (1.8,0) {\Large{$y \otr f(x)$}};
\node [draw=none, inner sep = 0] at (-1.5,-2.2) {\Large{$(y \otr f(x)) \tr x$}};
\node [draw=none, inner sep = 0] at (2,-2) {\Large{$x$}};
\end{tikzpicture}
\hspace{2cm}
&
\begin{tikzpicture}[scale=.9, transform shape]
\draw[>=triangle 45, <->] (0,0) -- (2,0);
\end{tikzpicture}
\hspace{2cm}
&
\begin{tikzpicture}[domain=-2:2, scale=1, knot gap=9, transform canvas={scale=0.65}]
\draw [knot=black,line width=0.50mm,
decoration={markings, mark=at position 0.20  with {\arrow[black,ultra thick]{<}}},
decoration={markings, mark=at position 0.80 with {\arrow[black,ultra thick]{<}}}, postaction={decorate}]
(2,1.5) to [out=left, in=right, looseness=1] (-.5,0) to [out=right, in=left, looseness=1] (2,-1.5);
 \draw [knot=black,line width=0.50mm,
decoration={markings, mark=at position 0.25 with {\arrow[black,ultra thick]{>}}},
decoration={markings, mark=at position 0.76 with {\arrow[black,ultra thick]{>}}}, postaction={decorate}]
(0,1.9365) to [out=down, in=up, looseness=0] (-1.5,-1.9365) ;
\node [draw=none, inner sep = 0] at (2,2) {\Large{$f(x)$}};
\node [draw=none, inner sep = 0] at (0,2.2) {\Large{$y$}};
\node [draw=none, inner sep = 0] at (2,-2) {\Large{$x$}};
\node [draw=none, inner sep = 0] at (-1.5,-2.2) {\Large{$y$}};
\end{tikzpicture}
\end{tabular}
\]
\vspace{2cm}

Lastly, we consider the type III Legendrian Reidemeister move. 




\vspace{1cm}
\[
\begin{tabular}{lll}
\begin{tikzpicture}[domain=-2:2, scale=1, knot gap=9, transform canvas={scale=0.65}]
\draw [knot=black,line width=0.50mm, 
decoration={markings, mark=at position 0.25  with {\arrow[black,ultra thick]{<}}},
decoration={markings, mark=at position 0.76 with {\arrow[black,ultra thick]{<}}}, postaction={decorate}]
(2,2) to  [out=down, in=up, looseness=0] (-2,-2);
\draw [knot=black,line width=0.50mm, 
decoration={markings, mark=at position 0.20  with {\arrow[black,ultra thick]{<}}},
decoration={markings, mark=at position 0.50  with {\arrow[black,ultra thick]{<}}},
decoration={markings, mark=at position 0.80 with {\arrow[black,ultra thick]{<}}}, postaction={decorate}]
(2,0) to  [out=left, in=right, looseness=1] (0,1) to  [out=left, in=right, looseness=1] (-2,0)  ;
\draw [knot=black,line width=0.50mm, 
decoration={markings, mark=at position 0.25  with {\arrow[black,ultra thick]{>}}},
decoration={markings, mark=at position 0.76 with {\arrow[black,ultra thick]{>}}}, postaction={decorate}]
(-2,2) to  [out=down, in=up, looseness=0] (2,-2);
\node [draw=none, inner sep = 0] at (-2,2.2) {\Large{$z$}};
\node [draw=none, inner sep = 0] at (2,-2.2) {\Large{$z$}};
\node [draw=none, inner sep = 0] at (2,-.3) {\Large{$y \tr z$}};
\node [draw=none, inner sep = 0] at (-2,-.3) {\Large{$y$}};
\node [draw=none, inner sep = 0] at (-2,-2.2) {\Large{$x$}};
\node [draw=none, inner sep = 0] at (.7,0) {\large{$x \tr z$}};
\node [draw=none, inner sep = 0] at (2,2.3) {\Large{$(x \tr z) \tr (y \tr z)$}};
\end{tikzpicture}
\hspace{2cm}
&
\begin{tikzpicture}[scale=.9, transform shape]
\draw[>=triangle 45, <->] (0,0) -- (2,0);
\end{tikzpicture}
\hspace{2cm}
&
\begin{tikzpicture}[domain=-2:2, scale=1, knot gap=9, transform canvas={scale=0.65}]
\draw [knot=black,line width=0.50mm, 
decoration={markings, mark=at position 0.25  with {\arrow[black,ultra thick]{<}}},
decoration={markings, mark=at position 0.76 with {\arrow[black,ultra thick]{<}}}, postaction={decorate}]
(2,2) to  [out=down, in=up, looseness=0] (-2,-2);
\draw [knot=black,line width=0.50mm, 
decoration={markings, mark=at position 0.20  with {\arrow[black,ultra thick]{<}}},
decoration={markings, mark=at position 0.50  with {\arrow[black,ultra thick]{<}}},
decoration={markings, mark=at position 0.80 with {\arrow[black,ultra thick]{<}}}, postaction={decorate}]
(2,0) to  [out=left, in=right, looseness=1] (0,-1) to  [out=left, in=right, looseness=1] (-2,0)  ;
\draw [knot=black,line width=0.50mm, 
decoration={markings, mark=at position 0.25  with {\arrow[black,ultra thick]{>}}},
decoration={markings, mark=at position 0.76 with {\arrow[black,ultra thick]{>}}}, postaction={decorate}]
(-2,2) to  [out=down, in=up, looseness=0] (2,-2);
\node [draw=none, inner sep = 0] at (-2,2.2) {\Large{$z$}};
\node [draw=none, inner sep = 0] at (2,-2.2) {\Large{$z$}};
\node [draw=none, inner sep = 0] at (2,-.3) {\Large{$y \tr z$}};
\node [draw=none, inner sep = 0] at (-2,-.3) {\Large{$y$}};
\node [draw=none, inner sep = 0] at (-2,-2.2) {\Large{$x$}};
\node [draw=none, inner sep = 0] at (-.7,-.1) {\large{$x \tr y$}};
\node [draw=none, inner sep = 0] at (2,2.3) {\Large{$(x \tr y) \tr z$}};
\end{tikzpicture}
\end{tabular}
\]
\vspace{2cm}


Thus we can make the following definition:

\begin{definition}\label{LegendleDef}
	A \textit{Legendrian rack}  is a triple $(X, \tr, f)$, where 
$(X, \tr)$ is a rack and  $f: X \rightarrow X$ is a map such that the 
following properties hold for all $x, y \in X$:
	\begin{list}{}{} 
		\item[\textup{(I)}]{$ f^2(x \tr x) =x=f^2(x) \tr x$},
		\item[\textup{(II)}]{$ f(x \tr y)= f(x) \tr y $} 
		\item[\textup{(III)}]{$x  \tr f(y)= x \tr y$}. 
	\end{list}	
The map $f$ is called a \textit{Legendrian map} or 
\textit{Legendrian structure} on $X$.
\end{definition}

By construction, we have the following:

\begin{proposition}
Let $(X,\tr,f)$ be a Legendrian rack. Then the number $\Phi_X^{\mathbb{Z}}(L)$ of colorings of a front projection $L$ of a Legendrian knot or link is an integer-valued invariant of Legendrian isotopy. We call this number of colorings the \textit{Legendrian rack counting invariant}.
\end{proposition}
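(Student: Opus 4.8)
The plan is to show that the cardinality of the set of $X$-colorings of a front projection is unchanged by each move in a generating set for the Legendrian isotopy relation. Recall that two front projections represent Legendrian isotopic knots precisely when they are connected by a finite sequence of the Legendrian Reidemeister moves of types I, II and III depicted above, together with the $180^\circ$ rotations about the coordinate axes. An $X$-coloring assigns an element of $X$ to each arc of the front projection, where an arc now runs between consecutive undercrossings \emph{and} cusps, subject to two local rules: at each crossing the usual rack relation holds, so that the undercrossing output is $x\tr y$ (and $x\otr y$ for the reverse under-strand orientation), and at each cusp the two incident arcs are related by the map $f$ exactly as in the third arc-labeling diagram above. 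The counting invariant $\Phi_X^{\mathbb{Z}}(L)$ is the number of such colorings. Since colorings are cut out by purely local conditions, for each move it suffices to produce a bijection between the colorings of the two local tangles that restricts to the identity on the colors of the boundary arcs; splicing such a bijection into the unchanged colorings of the rest of the diagram then shows that the two counts agree.

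First I would dispatch the type III move. The labels computed in its two diagrams show that the outgoing color of the top strand is $(x\tr z)\tr(y\tr z)$ on one side and $(x\tr y)\tr z$ on the other, while all boundary colors agree; these two expressions coincide by the rack self-distributivity axiom (ii), and the consistency of reading each undercrossing in either direction is guaranteed by axiom (i). Hence type III induces a boundary-fixing bijection of colorings with no use of $f$. Next I would treat the type I and type II moves, where the cusps, and therefore $f$, enter. Matching the boundary labels in the type I diagrams forces exactly $f^2(x\tr x)=x$ and $f^2(x)\tr x=x$, which is axiom (I); matching the type II diagrams forces $f(x\tr y)=f(x)\tr y$ and $x\tr f(y)=x\tr y$, which are axioms (II) and (III) (after applying the rack relation $(a\otr y)\tr y=a$ to clear the $\otr$ appearing in the labels). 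Because the Legendrian rack axioms were extracted from precisely these diagrams, each axiom is exactly the assertion that, for every assignment of colors to the incoming arcs, the colors forced on the outgoing arcs agree on the two sides of the move. This yields the required bijection for every type I and type II move.

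The step I expect to require the most care is the bookkeeping of cusp conventions and of the diagrams not drawn explicitly. I would first fix, once and for all, the labeling rule at each possible cusp type (left versus right cusp, for each orientation of the strand) in a manner consistent with the arc-labeling figure and with the oriented crossing rule; this is what makes the $f$- and $f^2$-labels in the move diagrams well defined. I would then verify the authors' claim that the two omitted type I diagrams and the two omitted type II diagrams produce no relations beyond (I)--(III), and finally check the $180^\circ$ rotations: a rotation reverses orientations and interchanges the roles of $\tr$ and $\otr$ at crossings and of the two cusp labels, and one checks that the resulting boundary matching is again a consequence of axioms (i)--(ii) and (I)--(III). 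None of these is conceptually difficult, since each reduces to a short application of the rack and Legendrian axioms, but there are enough cases that the careful enumeration is the real work; conceptually the proposition holds \emph{by construction}, exactly as the remark preceding the statement asserts.
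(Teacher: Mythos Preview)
Your proposal is correct and is exactly the approach the paper takes: the paper gives no proof beyond the phrase ``By construction, we have the following,'' relying on the fact that the Legendrian rack axioms (I)--(III) were read off directly from the labeled type I and II diagrams, while the type III move and the rack axioms (i)--(ii) handle crossings. Your write-up is simply the careful unpacking of that ``by construction,'' including the case-checking of the omitted diagrams and rotated moves that the paper leaves implicit.
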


\begin{remark}
Note that if the rack operation $\tr$ is idempotent, then the map $f$ in 
Definition~\ref{LegendleDef} becomes an involution.
\end{remark}

\begin{proposition}
If $(X, \tr, f)$ is finite Legendrian rack, then the map $f$ is an automorphism of the rack $(X, \tr)$.
\end{proposition}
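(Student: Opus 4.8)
The plan is to verify the two defining features of a rack automorphism separately: that $f$ preserves the operation $\tr$, and that $f$ is a bijection. The homomorphism property turns out to be essentially immediate from the axioms, so the real content lies in establishing bijectivity, which is where the finiteness hypothesis must enter.

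First I would show that $f$ is an endomorphism of $(X,\tr)$. For any $x,y\in X$, axiom (II) gives $f(x\tr y)=f(x)\tr y$, while axiom (III), applied with $f(x)$ in place of $x$, gives $f(x)\tr y=f(x)\tr f(y)$. Chaining these two identities yields $f(x\tr y)=f(x)\tr f(y)$, so $f$ respects $\tr$.

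Next, and this is the crux, I would use the left half of axiom (I), namely $f^2(x\tr x)=x$, to deduce surjectivity. Define $g\colon X\to X$ by $g(x)=x\tr x$. Then axiom (I) reads $f^2\circ g=\mathrm{id}_X$, which exhibits $g$ as a right inverse of $f^2$; in particular $f^2$ is surjective. Since $X$ is finite, a surjective self-map is automatically bijective, so $f^2=f\circ f$ is a bijection. A composite $f\circ f$ can be surjective only if $f$ itself is surjective, and finiteness again upgrades this to $f$ being a bijection.

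Combining the two parts, $f$ is a bijective endomorphism, hence an automorphism of $(X,\tr)$; one also checks that such an $f$ automatically preserves the dual operation $\otr$, since $\otr$ is determined by $\tr$ via $(x\tr y)\otr y=x$ (apply $f$ and use the homomorphism property). The step I expect to require the most care is the bijectivity argument: the key observation is that the first equation of axiom (I) secretly packages a right inverse for $f^2$, and it is precisely the finiteness hypothesis that converts this one-sided information into genuine invertibility of $f$. Without finiteness one would obtain only surjectivity of $f$, so I would be careful to flag exactly where that assumption is used.
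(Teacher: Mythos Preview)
Your proof is correct. The homomorphism step is essentially the same as the paper's (you apply axiom (II) then (III), the paper applies (III) then (II), arriving at the same identity). The bijectivity argument, however, is genuinely different: you use the \emph{left} half of axiom (I), $f^2(x\tr x)=x$, to exhibit a right inverse for $f^2$ and conclude that $f^2$, hence $f$, is surjective, then invoke finiteness to get injectivity. The paper instead proves injectivity directly from the \emph{right} half of axiom (I), $f^2(x)\tr x=x$, via a chain of equalities (using axiom (III) repeatedly to show that $f(x)=f(y)$ forces $x=y$), and then invokes finiteness for surjectivity. Your route is a bit cleaner, avoiding the five-step chain; the paper's route has the minor advantage of yielding injectivity outright even for infinite $X$, whereas yours yields surjectivity outright---so the two arguments are in a sense dual, each using one half of axiom (I) and leaning on finiteness to supply the other direction.
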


\begin{proof} Let $(X, \tr, f)$ be a Legendrian rack.  Then the 
conditions (II) and (III) of Definition~\ref{LegendleDef} imply that 
\[
f(x \tr y)=f(x  \tr f(y))=f(x) \tr f(y),
\]
making $f$ a homomorphism of the rack $(X, \tr)$.
Now if $f(x)=f(y)$ then we have
\[
x=f^2(x)\tr x=f^2(y) \tr x=f^2(y) \tr f^2(x)=f^2(y) \tr f^2(y)= f^2(y) \tr y=y,
\]
giving bijectivity since $X$ is finite set.  Thus the map $f$ is a rack 
automorphism.
\end{proof}

\begin{remark}
	Notice that the converse of this proposition is not true.  Take $X=\mathbb{Z}_4$ with $x \tr y=x+1$ and $f(x)=x+1.$  The first condition of Definition~\ref{LegendleDef} is not satisfied since $f^2(x \tr x)=f^2(x+1)=x+3\neq x.$ 
\end{remark}

Automorphisms of quandles and racks have been investigated in \cite{EMR}, where it was shown that automorphism of dihedral quandles are affine maps $f(x)=ax+b$.

\begin{definition}
Let $(X, \tr_X, f_X)$ and $(Y,\tr_Y, f_Y)$ be two Legendrian racks.  
A {\it Legendrian rack homomorphism} between $(X, \tr_X, f_X)$ and 
$(Y,\tr_Y, f_Y)$ is a rack homomorphism 
$\psi: (X, \tr_X) \rightarrow (Y,\tr_Y)$ such that 
$f_Y \circ \psi=\psi \circ f_X $, where
 $\tr_X$ and $\tr_Y$ 
 denote the rack operations of $X$ and $Y$, respectively.
 A {\it Legendrian rack isomorphism} is a bijective Legendrian rack 
homomorphism, and two Legendrian racks are {\it isomorphic} if there is a 
Legendrian rack isomorphism between them.  
\end{definition}

Let $X$ be a $(t,s)$-rack and consider a map $f:X\to X$ defined
by $f(x)=ax+b$ for some $a,b\in X$. What conditions are needed to make $f$
a Legendrian structure?

Condition (I) says that
\[a^2(t+s)x+(ab+b)=x=(a^2t+s)x+t(ab+b)\]
which implies $ab+b=(a+1)b=0$ and
$a^2(t+s)=1=a^2t+s$. Then $a^2s=s$ implies $(1-a^2)s=0$, so we obtain the
necessary and jointly sufficient conditions $a^2(t+s)=1$ and $(1-a^2)s=0$
for (I).


Condition (II) says that 
\[a(tx+sy)+b = 
atx+asy+b=
t(ax+b)+sy=
atx+tb+sy
\]
so we must have $(1-a)s=0$ and $(1-t)b=0$, and
condition (III) says
\[tx+s(ay+b)=tx+asy+sb=tx+sy\]
so we must have $sb=0$ and $(1-a)s=0$. Collecting the conditions together, 
we have proved:

\begin{proposition}\label{ts-Leg}
Let $X$ be a $(t,s)$-rack, i.e. a $\mathbb{Z}[t^{\pm 1},s]/(s^2-(1-t)s)$-module. 
Then $X$ is a Legendrian rack under the operations
\[x\tr y = tx+sy \quad \mathrm{and}\quad f(x)=ax+b\]
for $a,b\in X$ if and only if
$a^2(t+s)=1$ and  $(a+1)b=(1-a)s=(1-t)b=sb=0$.
\end{proposition}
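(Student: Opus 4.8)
The plan is to substitute the affine map $f(x)=ax+b$ directly into each of the three Legendrian axioms (I), (II), (III) of Definition~\ref{LegendleDef}, using the $(t,s)$-rack operation $x\tr y = tx+sy$, and then read off the resulting identities in the free variables $x,y$ as relations among $a,b,t,s$. The one preliminary computation I need is the second iterate $f^2(x)=f(ax+b)=a^2x+ab+b=a^2x+(a+1)b$, after which every expression occurring in (I)--(III) expands into an affine combination of $x$ and $y$. Since each axiom must hold for all $x,y\in X$, matching the coefficient of each variable against the constant term turns each axiom into a short list of equations, and the proposition will be obtained by assembling these lists and discarding redundancies.

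First I would treat axiom (I), which is really two equations. Expanding $f^2(x\tr x)=f^2((t+s)x)=a^2(t+s)x+(a+1)b$ and setting it equal to $x$ forces $a^2(t+s)=1$ together with $(a+1)b=0$. Expanding $f^2(x)\tr x = t\bigl(a^2x+(a+1)b\bigr)+sx=(a^2t+s)x+t(a+1)b$ and setting it equal to $x$ forces $a^2t+s=1$ together with $t(a+1)b=0$, the latter being already implied by $(a+1)b=0$. So from (I) I record $a^2(t+s)=1$, $a^2t+s=1$, and $(a+1)b=0$.

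Next I would handle (II) and (III), which are single equations each. For (II), comparing $f(x\tr y)=atx+asy+b$ with $f(x)\tr y=atx+tb+sy$ matches the $x$-coefficients automatically and forces $(1-a)s=0$ from the $y$-coefficients and $(1-t)b=0$ from the constant terms. For (III), comparing $x\tr f(y)=tx+asy+sb$ with $x\tr y=tx+sy$ again forces $(1-a)s=0$ from the $y$-coefficients and $sb=0$ from the constant terms.

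Finally I would collect everything and eliminate the one redundant relation. The conditions gathered so far are $a^2(t+s)=1$, $a^2t+s=1$, $(a+1)b=0$, $(1-a)s=0$, $(1-t)b=0$, and $sb=0$. The point I expect to be the only non-mechanical step is showing that $a^2t+s=1$ need not be listed separately: from $(1-a)s=0$ we get $as=s$, hence $a^2s=a(as)=as=s$, so $a^2t+s=a^2t+a^2s=a^2(t+s)=1$ holds automatically. This leaves exactly the stated conditions $a^2(t+s)=1$ and $(a+1)b=(1-a)s=(1-t)b=sb=0$. Since each coefficient comparison is an equivalence (necessary by reading off coefficients of an identity valid for all $x,y$, and sufficient by substituting back), this establishes the biconditional. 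The calculations are routine once $f^2$ is expanded; the only care required is in justifying the coefficientwise matching and in spotting that $a^2t+s=1$ is subsumed by the remaining relations.
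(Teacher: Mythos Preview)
Your proposal is correct and follows essentially the same approach as the paper: both expand axioms (I)--(III) for the affine map $f(x)=ax+b$ and compare coefficients. The only cosmetic difference is in handling the redundancy from axiom (I): the paper subtracts the two halves of (I) to get $(1-a^2)s=0$, which is then absorbed by the stronger $(1-a)s=0$ coming from (II)/(III), whereas you argue directly that $(1-a)s=0$ forces $a^2s=s$ and hence $a^2t+s=1$ follows from $a^2(t+s)=1$; these are equivalent bookkeeping choices.
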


\begin{example}\label{Z8}
Consider $\mathbb{Z}_{8}$ as a rack with operation
	\[
	x\tr y=3x-2y. 
	\]
	Then the map $f:\mathbb{Z}_{8} \rightarrow \mathbb{Z}_{8}$ 
given by $f(x)=a x+b$ is a Legendrian map for $(a,b)\in\{(1,0), (1,4), (5,0), (5,4)\}$.
\end{example}

\begin{example}
Consider the Legendrian rack $(\mathbb{Z}_{8}, \tr, f)$  with $x\tr y = 3x-2y$ and
$f(x)=5x+4$.  Any map 
$\psi:\mathbb{Z}_{8} \rightarrow \mathbb{Z}_{8}$ given by $\psi(x)=ax+a-1$, 
where $a \in \mathbb{Z}_{8}$, gives a Legendrian rack endomorphism.  
Furthermore, if $a$ is invertible in $\mathbb{Z}_{8}$, then $\psi$ is an 
automorphism.
\end{example}

\begin{example}\label{Z10}
Consider $\mathbb{Z}_{10}$ as a rack with operation
	\[x\tr y=3x-2y 	\]
Since the only square roots of $1$ are $1$ and $9$, the condition 
$(1-a)s=0$ is only satisfied for $a=1$; then $sb=0$ requires $b=5$, and 
we check that $(a+1)b=(1+1)5=0$, $(1-a)s=(1-1)(-2)=0$ and $(1-t)b=(1-3)5=(-2)5=0$ so the only
Legendrian map of the form $f(x)=ax+b$ on this rack is $f(x)=x+5$.
	\end{example}

We can define racks and quandles without algebraic formulas by listing their 
operation tables in the from of a matrix. Specifically, we can specify an
operation $\triangleright$ on the set $\{1,2,\dots, n\}$ with a matrix
$M$ whose entry in row $j$ column $k$ is $j\triangleright k$. 

\begin{example}\label{3elements}
Up to isomorphism, there are six racks of three elements.
For each of these racks, we list the possible Legendrian
maps $f\in S_3$ in cycle notation in the table below.
\[\begin{array}{r|l}
M & f\\ \hline
& \\
\left[\begin{array}{rrr}
1 & 1 & 1 \\
2 & 2 & 2 \\
3 & 3 & 3 
\end{array}\right] & 
f=(),(12), (13), (23) \\
& \\
\left[\begin{array}{rrr}
1 & 1 & 1 \\
3 & 2 & 2 \\
2 & 3 & 3 
\end{array}\right] & 
f=(), (23) \\
& \\
\left[\begin{array}{rrr}
1 & 3 & 2 \\
3 & 2 & 1 \\
2 & 1 & 3 
\end{array}\right] & 
f=() 
\end{array}\quad \begin{array}{r|l} M & f\\ \hline & \\
\left[\begin{array}{rrr}
2 & 2 & 2 \\
3 & 3 & 3 \\
1 & 1 & 1 
\end{array}\right] & 
(123) \\
& \\
\left[\begin{array}{rrr}
2 & 2 & 2 \\
1 & 1 & 1 \\
3 & 3 & 3 \\
\end{array}\right] & 
- \\
& \\
\left[\begin{array}{rrr}
2 & 2 & 1 \\
1 & 1 & 2 \\
3 & 3 & 3 \\
\end{array}\right] & 
- \\
\end{array}
\]
\end{example}

In the previous examples, we note that only quandles seem to have
Legendrian maps. Our next example shows that non-quandle racks
can have Legendrian maps.

\begin{example}\label{nonq}
The constant action rack structure on the set $\{1,2,3,4\}$ given by 
$x\tr y =\sigma(x)$, where (in cycle notation) $\sigma=(12)(34)$, 
has two Legendrian maps,
$f_1=(1324)$ and $f_2=(1423)$ as can be verified easily.
For example, $f_1^2=f_2^2=(12)(34)=\sigma$, so axiom (I) becomes $x=\sigma^2(x)$, axiom (II) becomes $f_i\sigma= \sigma f_i$ for $(i=1,2)$ and axiom (III) becomes a tautology.
\end{example}

The following example of a rack satisfies the conditions of Proposition~\ref{ts-Leg} where the map $f$ is an involution.    

\begin{example}
	Consider $\mathbb{Z}_{4}$ as a rack with operation
	\[x\tr y=3x+2y. 	\]
	The map $f(x)=-x$ makes this rack into a Legendrian rack.	
\end{example}

The following example of a rack that is not a quandle satisfies the conditions of Proposition~\ref{ts-Leg} where the map $f$ is not an involution.    

\begin{example}
	Consider $\mathbb{Z}_{49}$ as a rack with operation
	\[x\tr y=2x. 	\]
	The map $f(x)=5x$ makes this rack into a Legendrian rack.	
\end{example}

\begin{example}
Of the 19 isomorphism classes of racks with four elements, we find that 11
have nonempty sets of Legendrian structures. These are:
\[\begin{array}{r|l r|l}
M & f & M & f\\ \hline
& & & \\
\left[\begin{array}{rrrr}
1 & 3 & 4 & 2\\
4 & 2 & 1 & 3 \\
2 & 4 & 3 & 1 \\
3 & 1 & 2 & 4
\end{array}\right] 
& () 
& 
\left[\begin{array}{rrrr}
2 & 2 & 1 & 2 \\
4 & 4 & 2 & 4 \\
3 & 3 & 3 & 3 \\
1 & 1 & 4 & 1
\end{array}\right] & (124) \\
& & & \\
\left[\begin{array}{rrrr}
2 & 2 & 2 & 2 \\
3 & 3 & 3 & 3 \\
1 & 1 & 1 & 1 \\
4 & 4 & 4 & 4
\end{array}\right] 
& (123) 
& 
\left[\begin{array}{rrrr}
2 & 2 & 2 & 3 \\
3 & 3 & 3 & 1 \\
1 & 1 & 1 & 2 \\
4 & 4 & 4 & 4
\end{array}\right] 
& (123)\\
& & & \\
\left[\begin{array}{rrrr}
1 & 1 & 4 & 3 \\
2 & 2 & 2 & 2 \\
4 & 3 & 3 & 1 \\
3 & 4 & 1 & 4
\end{array}\right] 
& ()  
& 
\left[\begin{array}{rrrr}
1 & 3 & 1 & 1 \\
2 & 2 & 2 & 2 \\
3 & 4 & 3 & 3 \\
4 & 1 & 4 & 4
\end{array}\right] 
& () \\
& & & \\
\left[\begin{array}{rrrr}
1 & 4 & 4 & 1 \\
3 & 2 & 2 & 3 \\
2 & 3 & 3 & 2 \\
4 & 1 & 1 & 4
\end{array}\right] 
& (23),(14),(14)(23) 
& 
\left[\begin{array}{rrrr}
2 & 2 & 2 & 2 \\
1 & 1 & 1 & 1 \\
4 & 4 & 4 & 4 \\
3 & 3 & 3 & 3
\end{array}\right] 
& (1324),(1423) \\
& & & \\
\left[\begin{array}{rrrr}
1 & 1 & 4 & 1 \\
2 & 2 & 2 & 2 \\
3 & 3 & 3 & 3 \\
4 & 4 & 1 & 4
\end{array}\right] 
& (),(14) 
&
\left[\begin{array}{rrrr}
1 & 3 & 1 & 3 \\
2 & 2 & 2 & 2 \\
3 & 1 & 3 & 1 \\
4 & 4 & 4 & 4
\end{array}\right] 
& (13),(24),(13)(24),() \\
& & & \\
\left[\begin{array}{rrrr}
1 & 1 & 1 & 1 \\
2 & 2 & 2 & 2 \\
3 & 3 & 3 & 3 \\
4 & 4 & 4 & 4
\end{array}\right] 
& 
\begin{array}{l}
(12)(34),(13)(24),(14)(23),\\
(34),(23),(24),(12),(13),(14),().
\end{array}
 \\
\end{array}
\]
\end{example}

\section{Distinguishing Legendrian Knots using Legendrian Racks}\label{Col}

In this section we use coloring of Legendrian knot diagrams to distinguish some Legendrian knots. In the first three examples we respectively distinguish between the unknot and its positive stabilization, the trefoil and its negative stabilization and also the trefoil and its positive stabilization. The last two examples deal with distinguishing connected sum of Legendrian knots and distinguishing the two Legendrian knots of topological type $6_2$.

Note that crossing information is not denoted in the following diagrams since in a front projection of a Legendrian knot only contains crossings were the overstrand has a smaller slope than the understrand:
\vspace{.5cm}
\[\begin{tikzpicture}[domain=-2:2, scale=1, knot gap=9, transform canvas={scale=0.65}]
\draw [knot=black,line width=0.50mm]
(1.5,1.3229) to  [out=-135, in=45, looseness=0] (-1.5,-1.3229) ;
\draw [knot=black,line width=0.50mm]
(-1.5,1.3229) to [out=-45, in=135, looseness=0] (1.5,-1.3229);
\end{tikzpicture}
  \]
  \vspace{.5cm}

 Now we start with the following example distinguishing between the unknot and its positive stabilization.

\begin{example}\label{UnknotStab}
Consider the following diagrams of the unknot and its positive stabilization.
A coloring of the diagram of the unknot  by $(X, \tr, f)$ gives the condition $f^2(x)=x$, while a coloring of the diagram of its positive stabilization by $(X, \tr, f)$ gives the condition $f^4(x)=x$.  Now by choosing $(X, \tr, f)$ to be the Legendrian rack given in Example~\ref{nonq} and since $f^4$ is the identity map while $f^2$ is not, the two knots are thus distinguished by their sets of colorings. 

\vspace{.7cm}
\[
\begin{tabular}{ll}
\begin{tikzpicture}[domain=-2:2, scale=1, knot gap=9, transform canvas={scale=0.65}]
\draw [knot=black, knot gap=0,line width=0.50mm,
decoration={markings, mark=at position 0.50  with {\arrow[black,ultra thick]{<}}},
postaction={decorate}](-2,0) to [out=right, in=left, looseness=0.7] (0,1) to [out=right, in=left, looseness=0.7] (2,0);
\draw [knot=black, knot gap=0,line width=0.50mm,decoration={markings, mark=at position 0.50  with {\arrow[black,ultra thick]{>}}},
postaction={decorate}](-2,0) to [out=right, in=left, looseness=0.7] (0,-1) to [out=right, in=left, looseness=0.7] (2,0);
\node [draw=none, inner sep = 0] at (0,1.4) {${f(x)}$};
\node [draw=none, inner sep = 0] at (0,-1.4) {$f^2(x) = x$};
\end{tikzpicture}
\hspace{5cm}
&
\begin{tikzpicture}[domain=-3:3, scale=1, knot gap=9, transform canvas={scale=0.65}]
\draw [knot=black, knot gap=0,line width=0.50mm, decoration={markings, mark=at position 0.50  with {\arrow[black,ultra thick]{<}}},
postaction={decorate}](-3,0) to [out=right, in=left, looseness=0.7] (0,1.5) to [out=right, in=left, looseness=0.7] (3,.75);
\draw [knot=black, knot gap=0,line width=0.50mm, decoration={markings, mark=at position 0.50  with {\arrow[black,ultra thick]{<}}},
postaction={decorate}](3,.75) to [out=left, in=right, looseness=0.7] (1,0);
\draw [knot=black, knot gap=0,line width=0.50mm,decoration={markings, mark=at position 0.50  with {\arrow[black,ultra thick]{<}}},
postaction={decorate}](1,0) to [out=right, in=left, looseness=0.7] (3,-.75);
\draw [knot=black, knot gap=0,line width=0.50mm, decoration={markings, mark=at position 0.50  with {\arrow[black,ultra thick]{>}}},
postaction={decorate}](-3,0) to [out=right, in=left, looseness=0.7] (0,-1.5) to [out=right, in=left, looseness=0.7] (3,-.75);
\node [draw=none, inner sep = 0] at (0,-1.8) {$f^4(x) = x$};
\node [draw=none, inner sep = 0] at (3.2,-.35) {${f(x)}$};
\node [draw=none, inner sep = 0] at (3.2,.35) {${f^2(x)}$};
\node [draw=none, inner sep = 0] at (0,1.8) {${f^3(x)}$};
\end{tikzpicture}
\end{tabular}
\]
\vspace{1cm}
\end{example}

The following example shows that Legendrian rack colorings distinguish the trefoil from its positive stabilization.

\begin{example}\label{TrefoilPositiveStab}
Consider the following diagrams of the trefoil and its negative stabilization. A coloring of the diagram of the trefoil by $(X, \tr, f)$ gives the following conditions at the crossings: 
\begin{eqnarray*}
 x \tr f(y) & = & f^2(z),\\
 y \tr f(z) & = & f^2(x),\\
 z \tr f(x) & = & f^2(y)
\end{eqnarray*} 
while a coloring of the diagram of its negative stabilization by $(X, \tr, f)$ gives the following conditions at the crossings: 
\begin{eqnarray*}
 x \tr f(y) & = & f^4(z),\\
 y \tr f(z) & = & f^2(x),\\
 z \tr f(x) & = & f^2(y).
 \end{eqnarray*}
Now by choosing $(X, \tr, f)$ to be the Legendrian rack given in Example~\ref{nonq}, the system of equations for the trefoil has a solution $x=y=z$, while the system of equations for its positive stabilization has no solution, thus the two knots are distinguished by their sets of colorings. 
\vspace{.7cm}
\[
\begin{tabular}{ll}
\begin{tikzpicture}[domain=-3:3, range=-3:3, scale=1, knot gap=9, transform canvas={scale=.65}]
\begin{knot}[consider self intersections, flip crossing=2]
\draw[line width=0.50mm, decoration={markings, mark=at position .1 with {\arrow{>}}}, decoration={markings, mark=at position .9 with {\arrow{>}}}, postaction={decorate}] 
(-3,0) to [out=right, in=left, looseness=0.7] (-1,1.5) to [out=right, in=left, looseness=0.7] (1,.65)to [out=left, in=right, looseness=0.7] (-1,-.65) to [out=right, in=left, looseness=0.7, knot gap =9] (1,-1.5) to [out=right, in=left, looseness=0.7] (3,0) to [out=left, in=right, looseness=0.7] (1,1.5) to [out=left, in=right, looseness=0.7] (-1,.65)to [out=right, in=left, looseness=0.7] (1,-.65) to [out=left, in=right, looseness=0.7] (-1,-1.5) to [out=left, in=right, looseness=0.7] (-3,0);
\end{knot}
\node [draw=none, inner sep = 0] at (-2.5,1) {$f(y)$};
\node [draw=none, inner sep = 0] at (2.5,1) {${f^2(z)}$};
\node [draw=none, inner sep = 0] at (-.8,1.1) {${x}$};
\node [draw=none, inner sep = 0] at (.8,1.1) {${f(y)}$};
\node [draw=none, inner sep = 0] at (.8,.25) {${f^2(y)}$};
\node [draw=none, inner sep = 0] at (-.8,.25) {${f(x)}$};
\node [draw=none, inner sep = 0] at (.8,-1.05) {${f^2(x)}$};
\node [draw=none, inner sep = 0] at (-.8,-.4) {${z}$};
\node [draw=none, inner sep = 0] at (2.5,-1) {${f(z)}$};
\node [draw=none, inner sep = 0] at (-2.5,-1) {${y}$};
\end{tikzpicture} 
\hspace{5cm}
&
\begin{tikzpicture}[domain=-4:4, scale=1, knot gap=9, transform canvas={scale=.65}]
\begin{knot}[consider self intersections, flip crossing=2]
\draw[line width=0.50mm, decoration={markings, mark=at position .1 with {\arrow{>}}}, decoration={markings, mark=at position .9 with {\arrow{>}}}, postaction={decorate}] 
(-3,0) to [out=right, in=left, looseness=0.7] (-1,1.5) to [out=right, in=left, looseness=0.7] (1,.65)to [out=left, in=right, looseness=0.7] (-1,-.65) to [out=right, in=left, looseness=0.7, knot gap =9] (1,-1.5) to [out=right, in=left, looseness=0.7] (2.9,-.65) to [out=left, in=right, looseness=0.7] (2,0)to [out=right, in=left, looseness=0.7] (2.9,.65)to [out=left, in=right, looseness=0.7] (1,1.5) to [out=left, in=right, looseness=0.7] (-1,.65)to [out=right, in=left, looseness=0.7] (1,-.65) to [out=left, in=right, looseness=0.7] (-1,-1.5) to [out=left, in=right, looseness=0.7] (-3,0);
\end{knot}
\node [draw=none, inner sep = 0] at (-2.5,1) {$f(y)$};
\node [draw=none, inner sep = 0] at (2.5,1.2) {${f^4(z)}$};
\node [draw=none, inner sep = 0] at (-.8,1.1) {${x}$};
\node [draw=none, inner sep = 0] at (.8,1.1) {${f(y)}$};
\node [draw=none, inner sep = 0] at (.8,.25) {${f^2(y)}$};
\node [draw=none, inner sep = 0] at (-.8,.25) {${f(x)}$};
\node [draw=none, inner sep = 0] at (.8,-1.05) {${f^2(x)}$};
\node [draw=none, inner sep = 0] at (-.8,-.4) {${z}$};
\node [draw=none, inner sep = 0] at (2.5,-1.2) {${f(z)}$};
\node [draw=none, inner sep = 0] at (-2.5,-1) {${y}$};
\node [draw=none, inner sep = 0] at (3,-.25) {${f^2(z)}$};
\node [draw=none, inner sep = 0] at (2,.6) {${f^3(z)}$};
\end{tikzpicture}
\end{tabular}
\]
\vspace{1cm}
\end{example}

The following example distinguishes between the trefoil and its positive stabilization.
\begin{example}\label{TrefoilNegativeStab}
Consider the following diagrams of the trefoil and its positive stabilization. 
\vspace{.7cm}
\[
\begin{tabular}{ll}
\begin{tikzpicture}[domain=-3:3, range=-3:3, scale=1, knot gap=9, transform canvas={scale=.65}]
\begin{knot}[consider self intersections, flip crossing=2]
\draw[line width=0.50mm, decoration={markings, mark=at position .1 with {\arrow{>}}}, decoration={markings, mark=at position .9 with {\arrow{>}}}, postaction={decorate}] 
(-2.9,0) to [out=right, in=left, looseness=0.7] (-1,1.5) to [out=right, in=left, looseness=0.7] (1,.65)to [out=left, in=right, looseness=0.7] (-1,-.65) to [out=right, in=left, looseness=0.7, knot gap =9] (1,-1.5) to [out=right, in=left, looseness=0.7] (3,0) to [out=left, in=right, looseness=0.7] (1,1.5) to [out=left, in=right, looseness=0.7] (-1,.65) to [out=right, in=left, looseness=0.7] (1,-.65) to [out=left, in=right, looseness=0.7] (-1,-1.5) to [out=left, in=right, looseness=0.7] (-2.9,0) ;
\end{knot}
\node [draw=none, inner sep = 0] at (-2.5,1) {$f(y)$};
\node [draw=none, inner sep = 0] at (2.5,1) {${f^2(z)}$};
\node [draw=none, inner sep = 0] at (-.8,1.1) {${x}$};
\node [draw=none, inner sep = 0] at (.8,1.1) {${f(y)}$};
\node [draw=none, inner sep = 0] at (.8,.25) {${f^2(y)}$};
\node [draw=none, inner sep = 0] at (-.8,.25) {${f(x)}$};
\node [draw=none, inner sep = 0] at (.8,-1.05) {${f^2(x)}$};
\node [draw=none, inner sep = 0] at (-.8,-.4) {${z}$};
\node [draw=none, inner sep = 0] at (2.5,-1) {${f(z)}$};
\node [draw=none, inner sep = 0] at (-2.5,-1) {${y}$};
\end{tikzpicture} 
\hspace{5cm}
&
\begin{tikzpicture}[domain=-3:3, range=-3:3, scale=1, knot gap=9, transform canvas={scale=.65}]
\begin{knot}[consider self intersections, flip crossing=2]
\draw[line width=0.50mm, decoration={markings, mark=at position .1 with {\arrow{>}}}, decoration={markings, mark=at position .9 with {\arrow{>}}}, postaction={decorate}] 
(-3,0) to [out=right, in=left, looseness=0.7] (-1,1.5) to [out=right, in=left, looseness=0.7] (1,.65) to [out=left, in=right, looseness=0.7] (-1,-.65) to [out=right, in=left, looseness=0.7] (1,-1.5) to [out=right, in=left, looseness=0.7] (3,0)to 
[out=left, in=right, looseness=0.7] (1,1.5) to [out=left, in=right, looseness=0.7] (-1,1) to [out=right, in=left, looseness=0.7] (0,.6)  to [out=left, in=right, looseness=0.7] (-1,.2) to [out=right, in=left, looseness=0.7] (1,-.65) to [out=left, 
in=right, looseness=0.7] (-1,-1.5) to [out=left, in=right, looseness=0.7] (-3,0);
\end{knot}

\node [draw=none, inner sep = 0] at (-2.5,1) {$f(y)$};
\node [draw=none, inner sep = 0] at (2.5,1) {${f^2(z)}$};
\node [draw=none, inner sep = 0] at (-.8,1.3) {\small ${x}$};
\node [draw=none, inner sep = 0] at (.8,1.1) {${f(y)}$};
\node [draw=none, inner sep = 0] at (.8,.25) {${f^2(y)}$};
\node [draw=none, inner sep = 0] at (-.8,.7) {\tiny ${f(x)}$};
\node [draw=none, inner sep = 0] at (-.3,.25) {\tiny ${f^2(x)}$};
\node [draw=none, inner sep = 0] at (.8,-.3) {\tiny ${f^3(x)}$};
\node [draw=none, inner sep = 0] at (.8,-1) {\tiny ${f^4(x)}$};
\node [draw=none, inner sep = 0] at (-.8,-.4) {${z}$};
\node [draw=none, inner sep = 0] at (2.5,-1) {${f(z)}$};
\node [draw=none, inner sep = 0] at (-2.5,-1) {${y}$};
\end{tikzpicture} 
\end{tabular}
\]
\vspace{1cm}

As in the previous example, a coloring of the diagram of the trefoil by $(X, \tr, f)$ gives the following conditions at the crossings: 
\begin{eqnarray*}
 x \tr f(y) & = & f^2(z),\\
 y \tr f(z) & = & f^2(x),\\
 z \tr f(x) & = & f^2(y)
\end{eqnarray*} 
while a coloring of the diagram of its negative stabilization by $(X, \tr, f)$ gives the following conditions at the crossings: 
\begin{eqnarray*}
 x \tr f(y)   & = & f^2(z),\\
 y \tr f(z)   & = & f^4(x),\\
 z \tr f^3(x) & = & f^2(y).
 \end{eqnarray*}
Now by choosing $(X, \tr, f)$ to be the one given in the top right corner of the chart in Example~\ref{3elements}, that is $x \tr y= x+1$ and $f=(123)$, the system of equations for the trefoil has a solution with $x=1, y=2$ and $z=3$, while this is not a solution to the system of equations for its positive stabilization, thus the two knots are distinguished by their sets of colorings.
\end{example}

The following example distinguishes between connected sums of Legendrian knots.

\begin{example}\label{ConnectedSum}
Lets call the knot diagrams on the left and on the right of the Figure in Example~\ref{TrefoilPositiveStab} respectively $K_1$ and $K_2$.  Now we use the following $4$ element rack with the map $f$ to distinguish the two connected sums $K_1 \# K_1$ and $K_1 \# K_2$. 
\[
\left[\begin{array}{rrrr}
2 & 2 & 2 & 2 \\
1 & 1 & 1 & 1 \\
4 & 4 & 4 & 4 \\
3 & 3 & 3 & 3
\end{array}\right] \;\; \text{and the map}\; f=(1423).
\]
The coloring of the connected sum $K_1 \# K_1$ 
\vspace{1cm}
\[
\begin{tikzpicture}[domain=-3:3, scale=1, knot gap=9, transform canvas={scale=.65}]
\begin{knot}[consider self intersections, flip crossing=2, decoration={markings, mark=at position .1 with {\arrow{>}}}, decoration={markings, mark=at position .9 with {\arrow{>}}}]
\draw[line width=0.50mm,->] (1,1.5) to [out=right, in=left, looseness=0.7] (1,1.5) to [out=right, in=left, looseness=0.7] (3,.65)to [out=left, in=right, looseness=0.7] (1,-.65) to [out=right, in=left, looseness=0.7, knot gap =9] (3,-1.5) to [out=right, in=left, looseness=0.7] (5,0) to [out=left, in=right, looseness=0.7] (3,1.5) to [out=left, in=right, looseness=0.7] (1,.65)to [out=right, in=left, looseness=0.7] (3,-.65) to [out=left, in=right, looseness=0.7] (1,-1.5); 
\draw[line width=0.50mm] (1,1.5) to [out=left, in=right, looseness=0.7] (-1,1.5);
\draw[line width=0.50mm](-1,1.5) to [out=left, in=right, looseness=0.7]  (-3,.65)to [out=right, in=left, looseness=0.7] (-1,-.65)to [out=left, in=right, looseness=0.7] 
(-3,-1.5)to [out=left, in=right, looseness=0.7] (-5,0)to [out=right, in=left, looseness=0.7] (-3,1.5)to [out=right, in=left, looseness=0.7] (-1,.65)to 
[out=left, in=right, looseness=0.7] (-3,-.65) to [out=right, in=left, looseness=0.7] (-1,-1.5) to [out=right, in=left, looseness=0.7] (1,-1.5);
\end{knot}
\node [draw=none, inner sep = 0] at (-4,1.6) {${f(z)}$};
\node [draw=none, inner sep = 0] at (0,1.7) {${u}$};
\node [draw=none, inner sep = 0] at (4,1.6) {${f^2(w)}$};
\node [draw=none, inner sep = 0] at (-3,1) {${f^2(x)}$};
\node [draw=none, inner sep = 0] at (1,1) {${v}$};
\node [draw=none, inner sep = 0] at (-1.4,.3) {${z}$};
\node [draw=none, inner sep = 0] at (-1.2,-.15) {${f(x)}$};
\node [draw=none, inner sep = 0] at (-1.4,-.95) {${x}$};
\node [draw=none, inner sep = 0] at (-2.8,-.15) {${f(y)}$};
\node [draw=none, inner sep = 0] at (0,-1.7) {${y}$};
\node [draw=none, inner sep = 0] at (-4,-1.6) {${f^2(z)}$};
\node [draw=none, inner sep = 0] at (3,.3) {${f(u)}$};
\node [draw=none, inner sep = 0] at (1,.3) {${f(v)}$};
\node [draw=none, inner sep = 0] at (3,-1) {${f^2(v)}$};
\node [draw=none, inner sep = 0] at (1,-.4) {${w}$};
\node [draw=none, inner sep = 0] at (4,-1.6) {${f(w)}$};
\end{tikzpicture}
\]
\vspace{1cm}

gives the following equations: 
\[
	\begin{array}{lrclrc}
	x \tr y & = & f^2(z), & z \tr f(x)& = & f(y),  \\
	u \tr f(z) & = & f^2(x), & v \tr u & = & f^2(w), \\
	w \tr f(v)& = & f(u), & y \tr f(w) &= & f^2(v).
	\end{array} 
\]
Axiom (III) of Definition~\ref{LegendleDef} simplifies this system to become: 
 	\[ 
 	\begin{array}{rclrcl}
 	x \tr y & = & f^2(z), & z \tr x & = &f(y),  \\
 	u \tr z& = & f^2(x), & v \tr u & = & f^2(w), \\
 	w \tr v& = & f(u), & y \tr w & = & f^2(v).
 	\end{array} 
 	\]
We prove that this system doesn't have a solution:  Let $\sigma=(12)(34)$ be the permutation on $\{ 1,2,3,4\}$ so that the rack operation becomes $x \tr y =\sigma(x), \forall x,y$.  First notice that $f^2=\sigma$ and thus the maps $f$ and $\sigma$ commute.  Then the first equation, $x \tr y=f^2(z)$, of the system gives $z=x$, while the equation $z \tr x=f(y)$ implies $y=f(z)=f(x)$.  The equation $u \tr z=f^2(x)$ gives $u=x$, while the equation $v \tr u=f^2(w) $ implies $v=w$.  The equation $w \tr v=f(u)$ gives $f(w)=u$ and the equation $y \tr w=f^2(v)$ implies $y=v$, thus $x=f(y)$, implying $x=f^2(x)$ but this is impossible since $f$ has no \emph{fixed} point.  
Now the coloring of $K_1 \# K_2$ in the figure 
\vspace{1cm}
\[
\begin{tikzpicture}[domain=-8:8, scale=1, knot gap=9, transform canvas={scale=.65}]
\begin{knot}[consider self intersections, flip crossing=2]
\draw[line width=0.50mm, ->] (1,1.5) to [out=right, in=left, looseness=0.7] (3,.65)to [out=left, in=right, looseness=0.7] (1,-.65) to [out=right, in=left, looseness=0.7, knot gap =9] (3,-1.5) to [out=right, in=left, looseness=0.7] (4.9,-.65) to [out=left, in=right, looseness=0.7] (4,0)to [out=right, in=left, looseness=0.7] (4.9,.65)to [out=left, in=right, looseness=0.7] (3,1.5) to [out=left, in=right, looseness=0.7] (1,.65)to [out=right, in=left, looseness=0.7] (3,-.65) to [out=left, in=right, looseness=0.7] (1,-1.5); 
\draw[line width=0.50mm] (1,1.5) to [out=left, in=right, looseness=0.7] (-1,1.5) to [out=left, in=right, looseness=0.7]  (-3,.65)to [out=right, in=left, looseness=0.7] (-1,-.65)to [out=left, in=right, looseness=0.7] 
(-3,-1.5)to [out=left, in=right, looseness=0.7] (-5,0)to [out=right, in=left, looseness=0.7] (-3,1.5)to [out=right, in=left, looseness=0.7] (-1,.65)to 
[out=left, in=right, looseness=0.7] (-3,-.65) to [out=right, in=left, looseness=0.7] (-1,-1.5) to [out=right, in=left, looseness=0.7] (1,-1.5);
\end{knot}
\node [draw=none, inner sep = 0] at (-4,1.6) {${f(z)}$};
\node [draw=none, inner sep = 0] at (0,1.7) {${u}$};
\node [draw=none, inner sep = 0] at (4,1.6) {${f^4(w)}$};
\node [draw=none, inner sep = 0] at (-3,1) {${f^2(x)}$};
\node [draw=none, inner sep = 0] at (1,1) {${v}$};
\node [draw=none, inner sep = 0] at (-1.4,.3) {${z}$};
\node [draw=none, inner sep = 0] at (-1.2,-.15) {${f(x)}$};
\node [draw=none, inner sep = 0] at (-1.4,-.95) {${x}$};
\node [draw=none, inner sep = 0] at (-2.8,-.15) {${f(y)}$};
\node [draw=none, inner sep = 0] at (0,-1.7) {${y}$};
\node [draw=none, inner sep = 0] at (-4,-1.6) {${f^2(z)}$};
\node [draw=none, inner sep = 0] at (3,.3) {${f(u)}$};
\node [draw=none, inner sep = 0] at (1,.3) {${f(v)}$};
\node [draw=none, inner sep = 0] at (3,-1) {${f^2(v)}$};
\node [draw=none, inner sep = 0] at (1,-.4) {${w}$};
\node [draw=none, inner sep = 0] at (4,-1.6) {${f(w)}$};
\node [draw=none, inner sep = 0] at (5,-.2) {${f^2(w)}$};
\node [draw=none, inner sep = 0] at (4,.5) {${f^3(w)}$};
\end{tikzpicture}
\]
\vspace{1cm}

gives the following equations: 
		\[ 
		\begin{array}{rclrcl}
	x \tr y& = & f^2(z), & z \tr f(x) & = & f(y),\\
	u \tr f(z)& = & f^2(x), & v \tr u & = & f^4(w),\\
	w \tr f(v)& = & f(u), &	y \tr f(w)& = & f^2(v).
		\end{array}
		\]		
 Axiom (III) of Definition~\ref{LegendleDef} simplifies this set of to become: 
 \[
\begin{array}{rclrcl}
 x \tr y & = & f^2(z), &
 z \tr x & = & f(y),\\
 u \tr z & = & f^2(x), &
 v \tr u & = & f^4(w),\\
 w \tr v & = & f(u),&
 y \tr w & = & f^2(v).
 \end{array}
 \]
One checks easily that setting $x=z=u=1$, $y=v=4$ and $w=3$ give a solution of this system of equations and thus a coloring of  $K_1 \# K_2$.  Now since $K_1 \# K_1$ doesn't have a coloring, we conclude that the two Legendrian knots $K_1 \# K_1$ and $K_1 \# K_2$ are distinct.
\end{example}

\noindent
\textsc{Department of Mathematics, \\
	Hamilton College, \\
	198 College Hill Rd.,\\
	Clinton, NY 13323,}\\


\noindent
\textsc{Department of Mathematics, \\
University of South Florida, \\
4202 E Fowler Ave., \\
Tampa, FL 33620}\\

\noindent
\textsc{Department of Mathematics, \\
Claremont McKenna College, \\
850 Columbia Ave., \\
Claremont, CA 91711}
\end{document}